\newtheorem{theorem}{Theorem}[section]
\newtheorem{proposition}[theorem]{Proposition}
\newtheorem{lemma}[theorem]{Lemma}
\numberwithin{equation}{section}
\newcommand{\Coh}{\mathrm{H}}
\newcommand{\TX}{\mathrm{T}X}
\newcommand{\KX}{\mathrm{K}_X}
\newcommand{\cTX}{\mathrm{T}\mathcal{X}}
\newcommand{\TlogD}{\mathrm{T}X(-\log D)}
\newcommand{\TXD}{\mathrm{T}X(-{D})}
\newcommand{\Atlog}{\mathrm{At}_{D}(E)}
\newcommand{\AtEL}{\mathrm{At}_{D}(E,L)}
\newcommand{\cTlogD}{\mathrm{T}\mathcal{X}(-\log {\mathcal{D}})}
\newcommand{\cAtlog}{\mathcal{A}t_{\mathcal{D}}(\mathcal{E})}
\newcommand{\Tan}{\mathrm{T}}
\newcommand{\Par}{\mathcal{T}}
\newcommand{\para}{t}
\newcommand{\degree}{\mathrm{deg}}
\begin{document}

\baselineskip=15pt

\title[Isomonodromic deformations and very stable bundles]{Isomonodromic deformations and very 
stable vector bundles of rank two}

\author[I. Biswas]{Indranil Biswas}

\address{School of Mathematics, Tata Institute of Fundamental
Research, Homi Bhabha Road, Mumbai 400005, India}

\email{indranil@math.tifr.res.in}

\author[V. Heu]{Viktoria Heu}

\address{Institut de Recherche Math\'ematique Avanc\'ee, Univesit\'e de Strasbourg, 7 rue 
Ren\'e-Descartes, 67084 Strasbourg Cedex, France}

\email{heu@math.unistra.fr}

\author[J. Hurtubise]{Jacques Hurtubise}

\address{Department of Mathematics, McGill University, Burnside
Hall, 805 Sherbrooke St. W., Montreal, Que. H3A 0B9, Canada}

\email{jacques.hurtubise@mcgill.ca}

\subjclass[2010]{14H60, 34M56, 53B15}

\keywords{Logarithmic connections, isomonodromic deformations, very stable bundle, 
Teichm\"uller space.}

\thanks{The first author is supported by a J. C. Bose Fellowship. The second author is supported 
by ANR-13-BS01-0001-01 and ANR-13-JS01-0002-01.}

\date{}

\begin{abstract}
For the universal isomonodromic deformation of an irreducible logarithmic rank two connection 
over a smooth complex projective curve of genus at least two, consider the family of 
holomorphic vector bundles over curves underlying this universal deformation. In a previous 
work we proved that the vector bundle corresponding to a general parameter of this family is 
stable. Here we prove that the vector bundle corresponding to a general parameter is in fact 
very stable (it does not admit any nonzero nilpotent Higgs field).
\end{abstract}

\maketitle

\tableofcontents

\section{Introduction}

Let $E\longrightarrow X$ be a rank two holomorphic vector bundle over a compact Riemann surface
$X$ of genus $g$. The \emph{Segre-invariant} of $E$ is defined as
$$
\kappa (E)\,:=\,\min_L (\degree(E)-2\degree(L))\, ,
$$
where the minimum is taken over all holomorphic line subbundles $L$ of $E$. This $\kappa (E)$ is
bounded above by $g$ \cite{Na}. If
$\kappa(E)\,>\,0$ then $E$ is called \textit{stable}, and it is called
\emph{maximally stable} if $\kappa(E)\,>\,g-2$ with $g\, \geq\, 2$. Now let $\pi \,:\, 
\mathcal{X}\,\longrightarrow\, \Par$ be a holomorphic family of compact Riemann 
surfaces of genus $g$, and let $\mathcal{E}$ be a holomorphic vector bundle over
$\mathcal{X}$. For every $\para \in \Par$, set $X_\para\,:=\,\pi^{-1}(\para)$ and 
$E_\para\,:=\,\mathcal{E}\vert_{X_\para}$. We shall further denote
\begin{equation}\label{filtBk}
\Par_k\,:=\,\{\para \,\in\, \Par~\mid ~\kappa(E_\para)\,\leq \,k\}
\end{equation}
for every $k\,\in\, \mathbb{Z}$. From a theorem of Maruyama and Shatz, \cite{Maru}, \cite{Sh}, 
it follows that $\Par_k$ is a closed analytic subset of $\Par$ for each $k$. The codimension of 
$\Par_k$ in $\Par$ will be denoted by $\mathrm{codim}( \Par_{k} , \,\Par)$. By convention,
the empty set has infinite codimension. The 
Riemann--Hilbert type question addressed in \cite{He}, \cite{BHH}, rephrased for 
the particular case of rank $2$ vector bundles, would be the following:

\emph{What is the maximal Segre-invariant for a family of
holomorphic vector bundles that can be endowed with 
a flat logarithmic connection, inducing a prescribed logarithmic connection on a central 
parameter $\para_0\,\in \, \Par$}?

Indeed, the general theme of \cite{BHH}, continued to be pursued here, is that the isomonodromic 
deformations provide a good ``transverse'' family of deformations of a vector bundle, in which 
exceptional behaviors, such as instability, seem to occur with an appropriate codimension. Here 
we consider the stronger condition of very stability. The very stable bundles were introduced by 
Laumon \cite{La} and play an important role in the study of the Hitchin systems. They are 
generic but turn out to be extraordinarily hard to produce explicitly. In fact, the existence of 
very stable bundles is a landmark result \cite{La}; the dominance of the Hitchin map was 
obtained as a consequence of the existence of very stable bundles.

Let $D\,=\,\{x_1,\, \cdots,\, x_n\}\, \subset\, X$ be a finite subset of $n\,\geq\, 0$
distinct points. The corresponding reduced divisor $\sum_{i=1}^n x_i$ on $X$ will also be denoted by
$D$. Let $\delta$ be a logarithmic connection, singular over $D$, on a holomorphic vector bundle
$E\,\longrightarrow\, X$ (see Section \ref{Sec:Atiyah} for its definition). An
isomonodromic deformation of the quadruple $(X,\, D,\, E,\,\delta)$ is given by the following data:
\begin{itemize}
\item a holomorphic family of vector bundles $\mathcal{E}\,\longrightarrow\,
\mathcal{X}\,\longrightarrow\,\Par$ as above,

\item a divisor $\mathcal{D}$ on $\mathcal{X}$ given by the sum of the images of 
$n$ disjoint holomorphic sections $\Par\,\longrightarrow\, \mathcal{X}$,

\item a flat logarithmic connection $\nabla$ on $\mathcal{E}$ singular over 
$\mathcal{D}$, and

\item an isomorphism of pointed curves $f \,:\,(X,\,D)\,\longrightarrow\,(X_{\para_0}
,\, D_{\para_0})$, where $\para_0\,\in\, \Par$ is a base point, together with a holomorphic
isomorphism $\psi\,:\, E\,\longrightarrow
\, f^*\mathcal{E}$ of vector bundles such that $f^*\nabla \,=\,\psi_*(\delta)$.
\end{itemize}

Any quadruple $(X,\, D,\, E,\,\delta)$ as above with $3g-3+n\,>\,0$ admits a
universal isomonodromic deformation, satisfying a universal property for
the isomonodromic deformations as above, with respect to a germification 
$(\Par,\,\para_0)$ of the parameter space \cite{He1}. This is a consequence of the 
logarithmic Riemann--Hilbert correspondence \cite{De} combined with Malgrange's 
lemma \cite{Mal}. For this universal isomonodromic deformation, the parameter space 
is the Teichm\"uller space $\mathrm{Teich}_{g,n}$, with a central parameter 
corresponding to $(X,\,D)$, and the family of pointed curves $(\mathcal{X},\,\mathcal{D})$
over it being the universal Teichm\"uller curve. We recall the main theorem of \cite{He}.

\begin{theorem}[\cite{He}]\label{BHHrank2}
Let $(X,\, D,\, E,\,\delta)$ be as above with $3g-3+n\,>\,0$ and $\delta$ irreducible. Let
$\mathcal{E}\,\longrightarrow\, (\mathcal{X},\,\mathcal{D})\,\longrightarrow\,\Par$ be
the holomorphic family of
vector bundles over pointed curves underlying the universal isomonodromic
deformation of $(X,\, D,\, E,\,\delta)$. Then for the filtration
$$\cdots \,\subset\, \Par_{k}\,\subset\, \cdots \,\subset\, \Par_{g-1}
\,\subset\, \Par_{g}\,=\,\Par$$
by closed analytic subsets defined in \eqref{filtBk}, the inequality
$$\mathrm{codim}( \Par_{k} , \,\Par) \,\geq \,g-1-k$$
holds for all $k$, in particular $\mathrm{codim}( \Par_{g-2} , \,\Par) \,> \,0$,
implying that the vector bundle $E_\para$ is maximally stable for general $\para\,\in\, \Par$. 
\end{theorem}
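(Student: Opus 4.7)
My plan is to work infinitesimally at a base point $\para_0 \in \Par$ representing $(X,D,E,\delta)$, identify $T_{\para_0}\Par$ with $H^1(X, \TlogD)$, and for each line subbundle $L \subset E$ of sufficiently large degree compute the obstruction to $L$ extending as a line subbundle of $\mathcal{E}$. I expect this obstruction to be controlled by the second fundamental form $\phi_L$ of $\delta$ along $L$, and the desired Segre-invariant bound should then fall out of a Riemann--Roch estimate on $H^0(X, L\otimes M^{-1}\otimes \KX)$ with $M = E/L$.

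The setup is the Atiyah exact sequence
\[
0\,\longrightarrow\, \mathrm{End}(E)\,\longrightarrow\, \Atlog\,\longrightarrow\, \TlogD\,\longrightarrow\, 0,
\]
of which $\delta$ is an $\mathcal{O}_X$-linear splitting. The induced map $\delta_{\ast}: H^1(X,\TlogD) \to H^1(X,\Atlog)$ computes the Kodaira--Spencer class of the universal isomonodromic deformation of $(X,D,E)$. Introducing the sub-Atiyah sheaf $\AtEL \subset \Atlog$ of logarithmic first-order operators preserving $L$, one gets a canonical quotient isomorphism $\Atlog/\AtEL \cong L^{-1}\otimes M$, and the composite
\[
\rho_L\,:\, H^1(X,\TlogD)\, \xrightarrow{\delta_{\ast}}\, H^1(X,\Atlog)\, \longrightarrow\, H^1(X, L^{-1}\otimes M)
\]
has kernel equal to the tangent space at $\para_0$ to the locus in $\Par$ on which $L$ deforms as a subbundle of $\mathcal{E}$.

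The key identification is that the sheaf-level map $\TlogD \to L^{-1}\otimes M$ underlying $\rho_L$ is precisely contraction with $\phi_L \in H^0(X, L^{-1}\otimes M\otimes \Omega^1_X(\log D))$, so Serre duality presents the transpose of $\rho_L$ as multiplication by $\phi_L$,
\[
\cdot\,\phi_L\,:\, H^0(X, L\otimes M^{-1}\otimes \KX)\, \longrightarrow\, H^0(X, \KX^{\otimes 2}(D)).
\]
Because $\delta$ is irreducible, $\phi_L$ is a nonzero section of a line bundle on the smooth curve $X$, so this multiplication map is injective on global sections, whence $\mathrm{rank}(\rho_L) \geq h^0(X, L\otimes M^{-1}\otimes \KX)$. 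Assuming $\kappa(E)\leq k$ and choosing $L$ achieving this, we have $2\degree(L)-\degree(E)\geq -k$, hence $\degree(L\otimes M^{-1}\otimes \KX) \geq 2g-2-k$, and Riemann--Roch gives $h^0(L\otimes M^{-1}\otimes \KX) \geq g-1-k$. The pointwise codimension of the $L$-preserving locus in $T_{\para_0}\Par$ is therefore at least $g-1-k$.

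To upgrade this pointwise bound to the global codimension statement for $\Par_k$, I would consider, for each degree $d$ with $\degree(E)-2d\leq k$, the incidence variety
\[
\mathcal{I}_{k,d}\,=\,\{(\para, L)\,\mid\, L \subset E_\para \text{ is a line subbundle of degree } d\},
\]
project to $\Par$, and use Lange's bounds on the scheme of line subbundles of rank-two bundles to control the dimension of the Quot-type fibers. The main obstacle I anticipate is precisely this globalization: one must balance the possibly positive fiber dimension against the excess in the Riemann--Roch lower bound for $h^0(L\otimes M^{-1}\otimes \KX)$ as $d$ grows, so that the pointwise codimension $g-1-k$ survives after projection for every admissible $d$. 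Once this matching is handled, the Maruyama--Shatz closedness of $\Par_k$ assembles the finitely many contributing strata into the inequality $\mathrm{codim}(\Par_k,\Par)\geq g-1-k$, the case $k=g-2$ yielding maximal stability at a general parameter.
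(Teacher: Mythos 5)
The paper does not reprove Theorem \ref{BHHrank2}: it is quoted from \cite{He} (see also \cite{BHH}), so your attempt has to be measured against the argument there, which is also the mechanism this paper reuses in Lemma \ref{lem3}, Proposition \ref{prop1} and Theorem \ref{thm3}. Your infinitesimal computation is exactly that mechanism, and it is correct: the Kodaira--Spencer class of the isomonodromic deformation along $v\in \Coh^1(X,\TXD)$ is $\delta_*(v)$; the obstruction to extending a line subbundle $L$ to first order is the image of $\delta_*(v)$ in $\Coh^1(X,\,\mathrm{Hom}(L,E/L))\,=\,\Coh^1(X,\,\Atlog/\AtEL)$; the underlying $\mathcal{O}_X$--linear map $\TXD\to \mathrm{Hom}(L,E/L)$ is the second fundamental form $\phi_L$, nonzero because $\delta$ is irreducible; and Serre duality plus Riemann--Roch give $\mathrm{rank}\,(q\circ\delta)_*\,=\,h^0(X,\,\mathrm{Hom}(E/L,L)\otimes\KX)\,\ge\, 2\degree(L)-\degree(E)+g-1\,\ge\, g-1-k$. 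This is the heart of the theorem and you have it.

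The genuine gap is the globalization, which you explicitly leave open, and the route you sketch for it is the wrong one: no ``balancing of fiber dimension against the Riemann--Roch excess'' is needed, and Lange-type bounds on the scheme of line subbundles play no role. For each admissible degree $d$ (finitely many, by Nagata's bound $\kappa\le g$), let $\mathcal{I}_d\to\Par$ be the relative Quot space of rank--one subsheaves of degree $d$; it is a complex space, proper over $\Par$, so each image $Y_d$ is closed analytic and $\Par_k=\bigcup_d Y_d$. At a general smooth point $(\para,L)$ of a component of $\mathcal{I}_d$ lying over a general smooth point $\para$ of its image, the differential of the projection surjects onto $\Tan_\para Y_d$, while \emph{every} tangent vector of $\mathcal{I}_d$ at $(\para,L)$ projects into $\ker\,(q\circ\delta_\para)_*$, because a tangent vector to $\mathcal{I}_d$ is by definition a first--order deformation of $\para$ together with an extension of $L$. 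Hence $\Tan_\para Y_d\subseteq\ker\,(q\circ\delta_\para)_*$ and $\mathrm{codim}(Y_d,\Par)\ge \mathrm{rank}\,(q\circ\delta_\para)_*\ge g-1-(\degree(E)-2d)\ge g-1-k$; the dimension of the fibers of $\mathcal{I}_d\to Y_d$ never enters. If instead you try to compute $\dim Y_d$ as $\dim\mathcal{I}_d$ minus a generic fiber dimension, you are forced into exactly the matching problem you anticipate, and that accounting is both unnecessary and harder to close. Replacing your last paragraph by this tangent--space argument (together with the Maruyama--Shatz closedness you already invoke) completes the proof along the lines of \cite{He} and \cite{BHH}.
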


Let again $E$ be a holomorphic vector bundle on $X$ of rank two. A {\it Higgs field} on $E$ is a 
holomorphic section of $\text{End}(E)\otimes K_X\, =\, E\otimes E^*\otimes K_X$, where $K_X$ 
denotes the holomorphic cotangent bundle of $X$. Given a Higgs field $\theta\ \in\, \Coh^0(X,\, 
\text{End}(E)\otimes K_X)$, we have
$$
\text{trace}(\theta^i)\, \in\, \Coh^0(X,\, K^{\otimes i}_X)\, , \ \ i\,=\, 1,\, 2\, .
$$
A Higgs field $\theta$ on $E$ is called \textit{nilpotent} if
$\text{trace}(\theta)\,=\,0\,=\, \text{trace}(\theta^2)$, or equivalently, if $\theta^2
\,=\, 0$; the nilpotent cone plays a very important role in Geometric Langlands program
(see \cite{Fr}, \cite{KW}, \cite{GW}, \cite{DP} and references therein).

A holomorphic vector bundle $E$ of rank two is called \textit{very stable} if it does not admit 
any nonzero nilpotent Higgs field. For any $g\,\geq\,1$, there are maximally stable bundles 
which are not very stable. But a very stable vector bundle $E$ is always maximally stable. 
Indeed, if $L\, \subset\, E$ is a holomorphic line subbundle with $\degree(E)-2\cdot 
\degree(L)\,\leq\, g-2$, then
$$
\chi(\text{Hom}(E/L,\, L\otimes K_X)) \,=\, h^0(\text{Hom}(E/L,\, L\otimes K_X))-
h^1(\text{Hom}(E/L,\, L\otimes K_X))
$$
$$
\,=\, \text{degree}(\text{Hom}(E/L,\, L\otimes K_X))\,=\,
-(\degree(E)-2\cdot \degree(L))+g-1 \,\geq\, 1\, ,
$$
so $\Coh^0(X,\, \text{Hom}(E/L,\, L\otimes K_X))\,\not=\, 0$.
Now, for any nonzero element $$\theta'\, \in\,
\Coh^0(X,\, \text{Hom}(E/L,\, L\otimes K_X))\, ,$$ the Higgs field $\theta$ on $E$ defined by the
composition
$$
E\, \longrightarrow\, E/L \, \stackrel{\theta'}{\longrightarrow}\, L\otimes K_X
\, \hookrightarrow\, E\otimes K_X
$$
has the property that $\theta^2\,=\, 0$. By a similar argument
one can show that $E$ is very stable if and only if 
$\Coh^0(X,\, \text{Hom}(E/L,\, L\otimes K_X))\,=\,0$ for every
holomorphic line subbundle $L$ of $E$.

{}From now on, we are going to assume that the genus $g$ of $X$ is at least two. Note
that then the maximally stable bundles are stable. Let $\mathcal{E}\, \longrightarrow\,
\mathcal{X}\, \longrightarrow\,\Par$ be a holomorphic family of rank two vector bundles over curves of genus
$g$. The subset
\begin{equation}\label{Parnil}
\Par^{nil}\,:=\,\{\para\,\in\, \Par~\mid ~E_\para\longrightarrow X_\para 
\textrm{ is not very stable}\}
\end{equation}
is a closed analytic subset of $\Par$ \cite{La}. The following is a natural question to
ask:

\emph{Is the subset $\Par^{nil}$ of the parameter space of a given universal
isomonodromic deformation proper?}

Our aim here is to prove the following stronger version of Theorem \ref{BHHrank2}:

\begin{theorem}\label{thm1}
Let $(X,\, D,\, E,\,\delta)$ be as above with $g\, \geq\, 2$. Assume that $\delta$ is irreducible. Let
$\mathcal{E}\, \longrightarrow\,(\mathcal{X},\,\mathcal{D}) \, \longrightarrow\,\Par$
be the holomorphic family
of vector bundles over pointed curves underlying the universal isomonodromic deformation of $(X,\, D,\,
E,\,\delta)$. Then the closed analytic subset $ {\Par^{nil}}$, defined in \eqref{Parnil}, is a
proper closed subset of $\Par$. In particular, the vector bundle $E_\para$ is very stable for general
$\para\,\in\, \Par$. 
\end{theorem}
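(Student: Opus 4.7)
The plan is to derive the theorem from Theorem~\ref{BHHrank2} via a transversality argument in the relative Picard scheme. By Theorem~\ref{BHHrank2}, $\Par_{g-2}$ is a proper closed analytic subset, so it suffices to show properness of $\Par^{nil}$ in the open dense $\Par^{\circ}:=\Par\setminus\Par_{g-2}$, where $\kappa(E_\para)\in\{g-1,g\}$. A nonzero nilpotent Higgs field on $E_\para$ factors through its saturated kernel $L\subset E_\para$, so $\para\in\Par^{nil}$ if and only if some line subbundle $L\subset E_\para$ admits a nonzero section of $M_L:=L^{\otimes 2}\otimes(\det E_\para)^{-1}\otimes K_{X_\para}$, whose degree is $2g-2-\kappa_L\leq g-1$. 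For $L$ non-maximal, the degree drops by at least two, sending $M_L$ into a Brill--Noether locus of higher codimension; it is therefore enough to consider maximal $L$'s, of which there are finitely many, with $\deg M_L\in\{g-2,g-1\}$.

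Suppose for contradiction that $\Par^{\circ}\subset\Par^{nil}$. After stratifying by $\kappa(E_\para)$ and passing to a finite \'etale cover $\widetilde{\Par}\to\Par^{\circ}$ that trivializes the finite set of maximal line subbundles, I would obtain a holomorphic family $\mathcal{L}\subset\widetilde{\mathcal{E}}$ of line subbundles over the pulled-back universal curve. The associated line bundle $\mathcal{M}_\para:=\mathcal{L}_\para^{\otimes 2}\otimes(\det\widetilde{\mathcal{E}}_\para)^{-1}\otimes K_{\widetilde{\mathcal{X}}_\para}$, of some fixed degree $d\in\{g-2,g-1\}$, would have a nonzero section at each $\para\in\widetilde{\Par}$, and so the classifying map $\widetilde{\Par}\to\mathrm{Pic}^d(\widetilde{\mathcal{X}}/\widetilde{\Par})$ would factor through the relative Brill--Noether locus $W^0_d$, a proper closed subscheme of codimension at least $1$.

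The contradiction comes from computing the differential of $\para\mapsto[\mathcal{M}_\para]$. Consider the exact sequence of Atiyah bundles
$$0\longrightarrow\AtEL\longrightarrow\Atlog\longrightarrow L^{-1}\otimes(E/L)\longrightarrow 0,$$
where $\AtEL$ is the subalgebroid of connections preserving $L$. The flat logarithmic connection $\delta$, viewed as a splitting of $\Atlog\to\TlogD$, projects to the second fundamental form $\mathrm{II}_\delta\in H^0(X,L^{-1}\otimes(E/L)\otimes K_X(D))$, nonzero by irreducibility of $\delta$. The differential of $\para\mapsto[\mathcal{M}_\para]$ at $\para$, a class in $H^1(X_\para,\mathcal{O}_{X_\para})$, decomposes into the Kodaira--Spencer class of the family of pointed curves and a contribution from the isomonodromic deformation of $\mathcal{L}\subset\widetilde{\mathcal{E}}$ determined by $\mathrm{II}_\delta$. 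By Serre duality, pairing this class with a section of $\mathcal{M}_\para$ produces a class detecting transversality to $T_{[\mathcal{M}_\para]}W^0_d$, yielding the desired contradiction.

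The main obstacle will be executing this tangent-space calculation rigorously, particularly in the case $\kappa(E_\para)=g-1$, where $W^0_{g-1}$ is merely a divisor and transversality must be verified at codimension one. One must also rule out potential cancellations between the Kodaira--Spencer and isomonodromic contributions, and handle the additional parameters (marked-point motions) when $n>0$.
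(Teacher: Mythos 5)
Your proposal is a strategy outline rather than a proof, and the decisive step is missing. Everything hinges on the transversality of the classifying map $\para\,\longmapsto\,[\mathcal{M}_\para]$ to the relative Brill--Noether locus $W^0_d$, and you explicitly defer that computation (``the main obstacle will be executing this tangent-space calculation rigorously''). That calculation is where all the content of the theorem lives, and it is not obviously well posed in your setup: the line subbundle $\mathcal{L}_\para$ is \emph{not} transported by the connection (irreducibility of $\delta$ says precisely that $\nabla$ preserves no line subbundle), so it must be recovered at each $\para$ as the kernel of a nilpotent Higgs field on $E_\para$; the asserted decomposition of the differential into a Kodaira--Spencer part plus a part ``determined by $\mathrm{II}_\delta$'' is therefore unsubstantiated, and the possible cancellation you mention is exactly what has to be excluded. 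A second genuine gap is the reduction to maximal line subbundles: the kernel $L$ of a nonzero nilpotent Higgs field only satisfies $\kappa_L\,\le\, 2g-2$ and need not be maximal, maximal subbundles need not be finite in number for every stable bundle, and for non-maximal $L$ the relevant parameter space is a positive-dimensional relative Quot scheme, so ``$W^0_d$ has higher codimension'' does not by itself eliminate those strata --- one would need a dimension count over the full family of pairs $(L\,\subset\, E_\para,\,\theta)$, which is not given.

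For comparison, the paper avoids the Picard-variety/Brill--Noether framework entirely and works with the deformation complex of the nilpotent Higgs bundle. The key mechanism (Proposition \ref{prop1}) is that if nilpotency persists to first order along the isomonodromic deformation, then the Kodaira--Spencer class lifts through $\Coh^1(X,\,\AtEL)\,\longrightarrow\,\Coh^1(X,\,\Atlog)$, which forces $(q\circ\delta)_*\,=\,0$ on $\Coh^1(X,\,\TXD)$. On the other hand, irreducibility of $\delta$ makes $q\circ\delta\,:\,\TXD\,\longrightarrow\,\mathrm{Hom}(L,\,E/L)$ injective with torsion cokernel, hence $(q\circ\delta)_*$ surjective, and $\Coh^1(X,\,\mathrm{Hom}(L,\,E/L))\,\neq\,0$ because its Serre dual $\Coh^0(X,\,\mathrm{Hom}(E/L,\,L)\otimes\KX)$ contains the nonzero class of $\theta$ itself. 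Your ``pairing $\mathrm{II}_\delta$ against a section of $\mathcal{M}_\para$'' is morally this same duality, but in the paper it is a two-line argument on a fixed curve requiring no stratification by $\kappa$, no \'etale cover, and no control of how $L$ moves. If you want to salvage your route, you would need to carry out the tangent computation along $W^0_d$ honestly and treat the non-maximal strata; as written, the argument does not close.
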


In view of Theorem \ref{BHHrank2}, it is enough to consider the case where the 
holomorphic vector bundle $E$ corresponding to the central parameter is stable but not very 
stable. Following Donagi and Pantev \cite{DP}, stable vector bundles that are not 
very stable will be called \emph{wobbly} vector bundles.

Let $\mathcal{E}\, \longrightarrow\, \mathcal{X}\, \longrightarrow\, \Par$ be a 
family of wobbly rank two vector bundles over compact Riemann surfaces of genus 
$g$. Then there is a polydisc $\mathcal{U}$ in $\Par$ and a section $\Theta$ of 
$(\mathrm{End}(\mathcal{E})\otimes\Omega^1_{\mathcal{X}} )\vert_{\mathcal{U}}$ 
such that for each $\para \,\in\, \mathcal{U}$, the restriction 
$\Theta|_{X_\para}$ defines a nonzero nilpotent Higgs field on $E_\para$. Indeed, 
since one has a family of stable vector bundles on $X$, the family of possible 
Higgs fields for $\mathcal E$ is a holomorphic vector bundle $V$ over $\Par$. The constraint 
$\theta^2\,=\, 0$ of nilpotence then gives us a cone $\mathcal N$ in $V$, which 
maps surjectively onto $\Par$ since the family is wobbly. One can then choose a 
smooth point of $V$ for which the tangent plane to $\mathcal N $ surjects to that 
of $\Par$, and get our section $\Theta$ from there.
 
For that reason, in order to prove Theorem \ref{thm1}, we may use deformation 
theory of stable nilpotent Higgs bundles over curves. A {\it Higgs bundle} on $X$ 
is a pair of the form $(E,\, \theta)$, where $E$ is a holomorphic vector bundle on 
$X$ of rank two and $\theta$ is a Higgs field on $E$. Such a Higgs bundle $(E,\, 
\theta)$ is called \emph{nilpotent} if the Higgs field $\theta$ is nilpotent; if 
$2\cdot \degree (L)\, <\, \degree(E)$ for every holomorphic line subbundle $L\, 
\subset\, E$ with $\theta(L) \,\subset\, L\otimes \KX$, then $(E,\, \theta)$ is 
called \textit{stable}, so $(E,\, \theta)$ is stable if $E$ is stable. More 
specifically, we are going to elaborate, in Section \ref{Sec:DefNilHiggs}, the deformation
theory of stable nilpotent Higgs bundles over pointed curves.

\section{Isomonodromic deformations}

We begin by recalling a few results from Sections 2 and 4 in \cite{BHH} that would be 
needed later; note that in \cite{BHH} the Riemann surface is denoted $X_0$ instead of $X$.

\subsection{Logarithmic connections}\label{Sec:Atiyah}

Let $(E,\,X,\,D)$ be as in the introduction. Let 
$\text{Diff}^i(E,\,E)$ denote the holomorphic vector bundle on $X$ defined by the 
sheaf of differential operators, of order at most $i$, from the sheaf of 
holomorphic sections of $E$ to itself. We have a short exact sequence
\begin{equation}\label{e6}
0\, \longrightarrow\, \text{Diff}^0(E,\,E)\,=\, \text{End}(E)\, \longrightarrow\,
\text{Diff}^1(E,\,E)\, \stackrel{\sigma_1}{\longrightarrow}\, \TX\otimes \text{End}(E)
\, \longrightarrow\, 0\, ,
\end{equation}
where $\sigma_1$ is the symbol homomorphism. Define
$$
\Atlog\, :=\, \sigma^{-1}_1(\TX(-\log D) \otimes
\text{Id}_E)\, \subset\, \text{Diff}^1(E,\,E)\, .
$$
Since $X$ is one--dimensional, we have $\TX(-\log D)\,=\, \TXD\,:=\,\TX\otimes {\mathcal O}_X(
-D)$. From \eqref{e6} it follows that $\Atlog$ fits in an exact sequence of
holomorphic vector bundles on $X$
\begin{equation}\label{e7}
0\, \longrightarrow\, \text{End}(E)\, \longrightarrow\,
 \Atlog\, \stackrel{\sigma}{\longrightarrow}\, \TX(-\log D) \, \longrightarrow\, 0\, ,
\end{equation}
where $\sigma$ is the restriction of the surjection $\sigma_1$ in \eqref{e6}.

A \textit{logarithmic connection} on $E$ singular over $D$ is a holomorphic homomorphism
$$
\delta\, :\, \TX(-\log D) \, \longrightarrow\, \Atlog
$$
such that
\begin{equation}\label{coi}
\sigma\circ \delta\,=\, \text{Id}_{\TlogD}\, ,
\end{equation}
where $\Atlog$ and
$\sigma$ are constructed in \eqref{e7}. We note that \eqref{coi} implies that
the image $\delta(\TlogD)$ is a holomorphic line subbundle of $\Atlog$.

Similarly, we can construct the logarithmic Atiyah bundle $\cAtlog$ over
$\mathcal{X}$ for a family $\mathcal{E}\, \longrightarrow\, (\mathcal{X},\,
\mathcal{D})\, \longrightarrow\, \Par$ as in the introduction. Note that
$\cTlogD$ fits in the short exact sequence
$$
0 \, \longrightarrow\,\cTlogD \, \longrightarrow\, \cTX\, \longrightarrow\, 
N_{\mathcal D} \, \longrightarrow\, 0\, ,
$$
where $N_{\mathcal D}$ is the normal bundle to $\mathcal D$. So, in this case $\cTX\otimes 
\mathcal{O}_{\mathcal{X}}(-\mathcal{D})$ is a subsheaf of $\cTlogD$.

Again, a logarithmic connection $\nabla$ on $\mathcal{E}$ singular over $\mathcal{D}$ is a 
splitting of the corresponding Atiyah exact sequence. Rather than recalling the definition of 
flatness for $\nabla$ (see for example \cite[p.~131, Lemma 4.1]{BHH}), we just point out the 
following. A logarithmic connection $\delta$ as
in \eqref{coi} is called \textit{irreducible} if there is 
no holomorphic line subbundle $L'$ of $E$ such that $\delta(\TlogD(L'))\, \subset\, L'$. If 
$\nabla$ is flat then the connection $\nabla\vert_{E_s}$ on $E_s\, \longrightarrow\, X_s$ is 
irreducible for some $s\,\in\, \Par$ if and only if $\nabla\vert_{E_t}$ is irreducible for all 
$t\,\in\, \Par$.
 
\subsection{Infinitesimal deformations}\label{se2.2}

As before, $E\, \longrightarrow\, X$ is a rank two holomorphic vector bundle over a
compact Riemann surface $X$ of genus $g\,\geq\, 2$, and $D$ is a reduced effective divisor of degree $n$ on
$X$; let $\delta$ be a logarithmic connection on $E$ with polar divisor $D$.
An infinitesimal deformation of the $n$-pointed Riemann surface $(X,\,D)$ is a family
$$\xymatrix{X\ar[r]^f\ar[d]& \mathcal{X}\ar[d]\\0 \ar[r]&\mathcal{B}:=\mathrm{Spec}(\mathbb{C}[\varepsilon]/\varepsilon^2)}$$
together with a divisor $\mathcal{D}$ on $\mathcal{X}$ given by $n$ disjoint sections
$\mathcal{B}\,\longrightarrow\, \mathcal{X}$ such that $f^*\mathcal{D}\,=\,D$. 
The space of infinitesimal deformations of $(X,\,D)$ is given by $\Coh^1(X,\, \TXD)$, which
coincides with the fiber of the holomorphic tangent bundle
of $\mathrm{Teich}_{g,n}$ at the point corresponding to $(X,\,D)$. Any
such nontrivial deformation naturally embeds into the universal Teichm\"uller curve. The space of
infinitesimal deformations of the triple $(X,\, D,\, E)$ is given by $\Coh^1(X,\, \Atlog)$ \cite[p.~127,
(2.12)]{BHH}. On the other hand, if we have an infinitesimal deformation of
$(X,\, D,\, E)$ that can be endowed with a flat logarithmic connection $\nabla$, inducing $\delta$
on the central parameter, then by the universal property of the universal isomonodromic
deformation, the deformation of $(X,\, D,\, E)$ is already determined by the induced deformation
of $(X,\, D)$. Hence $\delta$ defines a homomorphism of infinitesimal deformations
\begin{equation}\label{id2}
T^\delta\, :\, \Coh^1(X,\, \TXD) \, \longrightarrow\, \Coh^1(X,\, \Atlog)\, .
\end{equation}
 
\begin{lemma}\label{lem3}
The homomorphism $T^\delta$ in \eqref{id2} coincides with the homomorphism
$$
\delta_*\, :\, \Coh^1(X,\, \TXD) \, \longrightarrow\, \Coh^1(X,\, \Atlog)
$$
associated to the connection homomorphism $\delta\, :\, \TXD \, \longrightarrow\, \Atlog$.
\end{lemma}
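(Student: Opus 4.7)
The plan is to compute both maps explicitly on Čech cocycles with respect to a sufficiently fine open cover of $X$ and verify that they produce the same class. Fix an open cover $\{U_i\}$ of $X$ over which $E$ is trivial, refined so that each $U_i$ contains at most one point of $D$, and represent a class $v \in \Coh^1(X,\TXD)$ by a Čech $1$-cocycle $\{v_{ij}\}$. By functoriality of Čech cohomology, the map $\delta_*$ sends $\{v_{ij}\}$ to the $1$-cocycle $\{\delta(v_{ij})\}$ with values in $\Atlog$. Thus the content of the lemma reduces to the statement that $T^\delta(v)$ is represented by this same cocycle.

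To do this, I would construct the first-order isomonodromic deformation associated to $v$ explicitly in these coordinates. The pointed deformation $(\mathcal{X},\mathcal{D})$ of $(X,D)$ corresponding to $\{v_{ij}\}$ is obtained by gluing the trivial families $U_i \times \mathcal{B}$ along their overlaps via the automorphism $\mathrm{id} + \varepsilon \cdot v_{ij}$ of $(U_i \cap U_j) \times \mathcal{B}$ over $\mathcal{B}$ (the vanishing of $v_{ij}$ along $D$ guarantees that the prescribed sections $\mathcal{B}\to \mathcal{X}$ extend the points of $D$). To lift this to an infinitesimal deformation of $(X,D,E)$ equipped with a flat logarithmic connection, equip $E|_{U_i} \otimes_{\mathbb{C}} \mathbb{C}[\varepsilon]/(\varepsilon^2)$ with the trivial extension of $\delta|_{U_i}$ on each chart, and glue over overlaps by
$$
s \,\longmapsto\, s \,+\, \varepsilon\cdot\delta(v_{ij})(s),
$$
where $\delta(v_{ij}) \in \Atlog(U_i \cap U_j)$ is interpreted as a first-order differential operator on $E$ with symbol $v_{ij}$. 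The cocycle property for $\{\delta(v_{ij})\}$ is immediate from that of $\{v_{ij}\}$ together with the $\mathbb{C}$-linearity of $\delta$, so the associated gluing class in $\Coh^1(X,\Atlog)$ is precisely $[\{\delta(v_{ij})\}]$.

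It remains to show that this explicit family is the universal isomonodromic lift of $v$. The logarithmic connection $\nabla$ obtained on the glued bundle $\mathcal{E}$ is flat: restricted to each chart it is $\delta|_{U_i}$ extended trivially in $\varepsilon$, which carries no curvature since $X$ is a curve and the extension in the $\varepsilon$-direction is trivial; and the gluing automorphism $s \mapsto s + \varepsilon\cdot\delta(v_{ij})(s)$ intertwines these local connections by construction, as $\delta(v_{ij})$ is by definition the covariant derivative along $v_{ij}$ induced by $\delta$. The restriction of $\nabla$ to the central fiber is $\delta$, so by the universal property of the isomonodromic deformation recalled in the introduction, this family is isomorphic to the pullback to $\mathcal{B}$ of the universal isomonodromic deformation along the tangent vector $v$. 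Consequently $T^\delta(v) = [\{\delta(v_{ij})\}] = \delta_*(v)$. The main point to verify carefully is the flatness of $\nabla$ and the compatibility of the gluing with the local connections; both reduce to the tautology that an element of $\Atlog$ with symbol $v_{ij}$ is precisely the data of an infinitesimal lift of the flow of $v_{ij}$ to $E$, and using $\delta(v_{ij})$ for that lift encodes exactly the horizontal transport for $\delta$.
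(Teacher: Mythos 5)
Your argument is correct, but it takes a genuinely different route from the one in the paper. The paper constructs nothing explicitly: it places $\delta$ and $\nabla$ as vertical arrows in the commutative diagram \eqref{diag} of two short exact sequences, $0\to\TXD\to f^*\cTlogD\to{\mathcal O}_X\to 0$ and $0\to\Atlog\to f^*\cAtlog\to{\mathcal O}_X\to 0$, and reads the lemma off from the resulting square of connecting homomorphisms $\Coh^0(X,{\mathcal O}_X)\to\Coh^1$, using the identifications quoted from \cite{BHH} of $a(1)$ with the Kodaira--Spencer class of $({\mathcal X},{\mathcal D})$ and of $b(1)$ with the deformation class of $(X,D,E)$ along the isomonodromic deformation. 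You instead build the first-order isomonodromic deformation by hand in \v{C}ech terms, gluing the trivial deformations by $\mathrm{id}+\varepsilon v_{ij}$ on the curve and by $\mathrm{id}+\varepsilon\,\delta(v_{ij})$ on the bundle, and then invoke the universal property to identify the result with the value of $T^\delta$ at $v$; this is legitimate and exactly matches how the paper defines $T^\delta$ in Section \ref{se2.2}. Your version is more self-contained (it does not lean on the lemmas of \cite{BHH} identifying the connecting homomorphisms with deformation classes), at the price of the verification you flag at the end: that the gluing maps intertwine the trivially extended local connections and that the glued $\nabla$ is flat. Both checks reduce to the single identity $[\delta_w,\,\delta(v_{ij})]\,=\,\delta([w,\,v_{ij}])$, which holds because the curvature of $\delta$ vanishes identically on a one-dimensional $X$; that one line is worth writing out, since it is the only place where the "tautology" you appeal to has actual content. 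In exchange, your computation makes the geometric mechanism visible --- the transition data of the deformed bundle is obtained by $\delta$-parallel transport along the flow of $v_{ij}$ --- whereas the paper's diagram chase buys brevity and avoids cocycles entirely.
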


\begin{proof}
Using the notation of
\cite[p.~131, Lemma 4.1]{BHH} and the commutative diagram in
\cite[p.~128, Section 2]{BHH}, we have the commutative diagram of homomorphisms of
sheaves
\begin{equation}\label{diag}
\xymatrix{
0\ar[r] & \TXD\ar[r]\ar[d]^{{\delta}}& f^*\cTlogD\ar[d]^{\nabla} \ar[r]
 & {\mathcal O}_{X}\ar[r]\ar@{=}[d]& 0\\
0\ar[r] &\Atlog \ar[r] & f^* \cAtlog \ar[r] & {\mathcal O}_{X}\ar[r] & 0
 }
\end{equation}
{}From \eqref{diag} we have the commutative diagram of cohomologies
\begin{equation}\label{diag2}
\xymatrix{
{\mathbb C}\,=\, \Coh^0(X,\, {\mathcal O}_{X}) \ar[r]^a\ar@{=}[d]&\Coh^1(X,\, \TXD)\ar[d]^{\delta_*} \\
 {\mathbb C}\,=\, \Coh^0(X,\, {\mathcal O}_{X}) \ar[r]^b &
\Coh^1(X,\, \Atlog)\, ,
 }
\end{equation}
where $a$ (respectively, $b$) is the connecting homomorphism in the long exact
sequence of cohomologies associated to the top (respectively, bottom) exact
sequence in \eqref{diag}.

Now, $a(1)\, \in\, \Coh^1(X,\, \TXD)$ is the infinitesimal deformation class
for the family of $n$-pointed Riemann surfaces $({\mathcal X},\, {\mathcal D})$
\cite[p.~125, (2.4)]{BHH}, and $$b(1)\, \in\, \Coh^1(X,\, \Atlog)$$
is the infinitesimal deformations of $(X,\, D,\, E)$ associated to the
isomonodromic deformation of $\delta$ \cite[p.~132, Lemma 4.2]{BHH}. Therefore, the
proof is completed by the commutativity of~\eqref{diag2}.
\end{proof}

\section{Higgs bundles on a fixed curve}

In this section, we recall the deformation theory of Higgs bundles over a fixed
Riemann surface.
 
\subsection{Stable Higgs bundles}

Let $X$ be a compact connected Riemann surface of genus $g$, with $g\, \geq\, 2$.
For any integer $d$, let ${\mathcal M}^d_X$ denote the moduli space of stable
vector bundles on $X$ of rank two and degree $d$. It is an irreducible smooth
quasi-projective variety defined over $\mathbb C$ of dimension $4g-3$. The locus
of very stable bundles in ${\mathcal M}^d_X$ is a nonempty Zariski open
subset \cite{La}, \cite[p. 229, Corollary 5.6]{BR}.

Fix an integer $d$. Let
$$
{\mathcal M}_X\, :=\, {\mathcal M}^d_X
$$
be the moduli space of stable
vector bundles on $X$ of rank two and degree $d$. Let
\begin{equation}\label{e1}
{\mathcal U}_{\rm vs}\, \subset\, {\mathcal M}_X
\end{equation}
be the moduli space of very stable vector bundles on $X$ of rank two and degree $d$.
As noted before, ${\mathcal U}_{\rm vs}$ is a
nonempty Zariski open subset of ${\mathcal M}_X$. Let
\begin{equation}\label{e0}
{\mathcal W} \, :=\, {\mathcal M}_X \setminus {\mathcal U}_{\rm vs}
\end{equation}
be the complement consisting of wobbly bundles, which is a closed sub-scheme of
${\mathcal M}_X$.

For any $E\, \in\, {\mathcal M}_X$, we have $\Tan^*_E {\mathcal M}_X\,=\,
\Coh^0(X,\, \text{End}(E)\otimes \KX)$, and hence
the total space $\Tan^*{\mathcal M}_X$ of the holomorphic cotangent bundle of
${\mathcal M}_X$ in \eqref{e1} is a Zariski open dense subset of the moduli
space of stable Higgs bundles of rank two and degree $d$ (openness follows
from \cite{Maru}). Let
\begin{equation}\label{e-1}
{\mathcal N}_X\, \subset\, \Tan^*{\mathcal M}_X
\end{equation}
be the locus of all $(E,\, \theta)$ such that $\theta$ is nonzero nilpotent; this
${\mathcal N}_X$ is a locally closed sub-scheme of $\Tan^*{\mathcal M}_X$. The closure of
${\mathcal N}_X$ in $\Tan^*{\mathcal M}_X$ is the union ${\mathcal N}_X \bigcup{\mathcal
W}$, where ${\mathcal W}$ is the wobbly locus defined in \eqref{e0}.

\subsection{Infinitesimal deformations of Higgs bundles}

Take any Higgs bundle $(E,\, \theta)\, \in\, \Tan^* {\mathcal M}_X$. Consider the
${\mathcal O}_X$--linear homomorphism
$$
f_\theta\, :\, \text{End}(E)\,=\, E\otimes E^*\, \longrightarrow\,
\text{End}(E)\otimes \KX
$$
defined by $A\, \longmapsto\, \theta\circ A- A\circ\theta$. This produces the following
two--term complex ${\mathcal C}^{(E,\theta)}_{\bullet}$ on $X$
\begin{equation}\label{e2}
{\mathcal C}^{(E,\theta)}_{0}\,=\, \text{End}(E) \, \stackrel{f_\theta}{\longrightarrow}\,
{\mathcal C}^{(E,\theta)}_{1}\,=\, \text{End}(E)\otimes \KX\, .
\end{equation}
The space of all infinitesimal deformations of the Higgs bundle $(E,\, \theta)$ is
parametrized by the first hypercohomology ${\mathbb H}^1({\mathcal C}^{(E,\theta)}_{\bullet})$
\cite{BR}, \cite{Ma}, \cite{Bot}. In particular, we have
$$
\Tan_{(E,\theta)}(\Tan^*{\mathcal M}_X)\,=\, {\mathbb H}^1({\mathcal C}^{(E,\theta)}_{\bullet})\, .
$$

Now assume that $(E,\, \theta)\, \in\, {\mathcal N}_X$, where ${\mathcal N}_X$ is
constructed in \eqref{e-1}. Note that this implies
that $E\,\in\, {\mathcal W}$. The subsheaf of $E$ defined by the kernel of $\theta$ is
a line subbundle of $E$; this line subbundle of $E$ will be denoted by $L$. Let
$$
\text{End}^s(E)\, \subset\, \text{End}(E)
$$
be the subbundle of rank three given by the kernel of the natural homomorphism
$$
{\rm End}(E)\, \longrightarrow\, \text{Hom}(L,\, E/L)
$$
that sends any locally defined endomorphism $A$ of $E$ to
the composition
$$
L\, \hookrightarrow\, E \,\stackrel{A}{\longrightarrow}\, E \,\longrightarrow\, E/L\, ;
$$
in other words, $\text{End}^s(E)$ is the subsheaf of endomorphisms of $E$
that preserve the line subbundle $L$. So we have a homomorphism
${\rm End}^s(E)\, \longrightarrow\, \text{Hom}(L,\, L)$ that sends
any locally defined section $A$ of $\text{End}^s(E)$ to $A\vert_L$. This
also implies that we have a homomorphism
${\rm End}^s(E)\, \longrightarrow\, \text{Hom}(E/L,\, E/L)$. Let
\begin{equation}\label{endn}
\text{End}^n(E)\, \subset\, \text{End}^s(E)
\end{equation}
be the line subbundle given by the kernel of the homomorphism
$$
{\rm End}^s(E)\, \longrightarrow\, \text{Hom}(L,\, L)\oplus \text{Hom}(E/L,\, E/L)
$$
constructed above, so $\text{End}^n(E)$ consists of nilpotent endomorphisms of $E$
with respect to the filtration $0\,\subset\, L\, \subset\, E$; the superscripts ``$s$'' and ``$n$''
stand for ``solvable'' and ``nilpotent'' respectively.

For the homomorphism $f_\theta$ in \eqref{e2} we have
\begin{equation}\label{hin}
f_\theta (\text{End}^s(E))\, \subset\, \text{End}^n(E)\otimes \KX\, .
\end{equation}
The restriction of $f_\theta$ to $\text{End}^s(E)$ will be denoted by
$f^s_\theta$. We have the two--term complex ${\mathcal D}^{(E,\theta)}_{\bullet}$ on $X$
\begin{equation}\label{e5}
{\mathcal D}^{(E,\theta)}_{0}\,=\, \text{End}^s(E) \, \stackrel{f^s_\theta}{\longrightarrow}\,
{\mathcal D}^{(E,\theta)}_{1}\,=\,\text{End}^n(E)\otimes \KX
\end{equation}
which is a subcomplex of ${\mathcal C}^{(E,\theta)}_{\bullet}$ constructed
in \eqref{e2}.

Denote the degree of the line bundle $L$ by $\nu$. Consider ${\mathcal N}_X$
in \eqref{e-1}; let
$$
{\mathcal Z}^\nu \, \subset\, {\mathcal N}_X
$$
be the locus of all $(E',\, \theta')\,\in\, {\mathcal N}_X$
such that $\degree(\text{kernel}(\theta'))
\,=\, \nu$. So we have $(E,\,\theta)\, \in\, {\mathcal Z}^\nu$. The tangent
space to ${\mathcal Z}^\nu$ at this point $(E,\,\theta)$ has the following description:
$$
\Tan_{(E,\theta)}{\mathcal Z}^\nu\, =\, {\mathbb H}^1({\mathcal D}^{(E,\theta)}_{\bullet})\, ,
$$
where ${\mathcal D}^{(E,\theta)}_{\bullet}$ is the complex in \eqref{e5} \cite[p.~228]{BR}.

\section{Higgs bundles on a family of curves}

We now recall the deformation theory of families of Higgs bundles over moving 
curves. As noted in Section \ref{se2.2}, infinitesimal deformations of a
triple $(X,\, D,\, E)$ is given by the first cohomology space of the logarithmic
Atiyah bundle associated to this triple. Now we have to enrich this space with the data 
corresponding to deformations of a given Higgs field on $E$. Afterwards, 
we will calculate the obstruction space for an initial nonzero nilpotent Higgs field to 
extend to a nilpotent Higgs field on a germification of the family.

\subsection{Infinitesimal deformation of a $n$-pointed curve and Higgs bundle}

Let $E$ be a rank $2$ vector bundle on $X$ of genus $g\, \geq\, 2$, and let
$D\,=\,\sum_{i=1}^n x_i$ be a divisor on $X$.

There is a natural homomorphism
\begin{equation}\label{eta}
\eta\, :\, \Atlog \, \longrightarrow\, \text{Diff}^1(\text{End}(E)\otimes \KX,\,
\text{End}(E)\otimes \KX)\, ,
\end{equation}
where $\text{Diff}^1(\text{End}(E)\otimes \KX,\,\text{End}(E) \otimes \KX)$ is the 
vector bundle on $X$ defined by the sheaf of differential operators of order at most one 
mapping locally defined holomorphic sections of $\text{End}(E) \otimes \KX$ to 
itself. To explain this $\eta$, if
\begin{itemize}
\item $\alpha$ is a locally defined holomorphic section of $ \Atlog$,

\item $\beta$ is a locally defined holomorphic section of $\text{End}(E)$,

\item $\omega$ is a locally defined holomorphic section of $\KX$ and

\item $s$ is a locally defined holomorphic section of $E$,
\end{itemize}
then
\begin{equation}\label{eta2}
(\eta(\alpha))(\beta\otimes\omega)(s)\,=\, \alpha(\beta(s))\otimes\omega +
\beta(s)\otimes (L_{\sigma (\alpha)}\omega) - \beta(\alpha(s))\otimes\omega\, ,
\end{equation}
where $L_{\sigma (\alpha)}\omega$ is the Lie derivative of $\omega$ with respect
to the vector field $\sigma (\alpha)$ (the homomorphism $\sigma$ is defined
in \eqref{e7}); note that both sides of \eqref{eta2} are
sections of $E\otimes \KX$. To prove that \eqref{eta2} defines $\eta$, substitute
$(f\beta)\otimes (\frac{1}{f}\omega)$ in place of $\beta\otimes\omega$ in
\eqref{eta2}, where $f$ is a locally defined nowhere vanishing holomorphic function on $X$. Then
in the right--hand side of \eqref{eta2}, the first term becomes
$\alpha(\beta(s))\otimes\omega+ \frac{\mathrm{d}f(\sigma (\alpha))}{f}\cdot \beta(s)\otimes\omega$
while the second term becomes $\beta(s)\otimes (L_{\sigma (\alpha)}\omega)
-\frac{\mathrm{d}f(\sigma (\alpha))}{f}\cdot \beta(s)\otimes\omega$. Therefore, we have
$(\eta(\alpha))(\beta\otimes\omega)\,=\, (\eta(\alpha))((f\beta)\otimes
(\frac{1}{f}\omega))$. From this it follows immediately
that $\eta$ is well--defined by \eqref{eta2}.

We will give another description of the homomorphism $\eta$ which is more canonical.

Let $p\, :\, P_{{\rm GL}(2)}\, \longrightarrow\, X$ be the holomorphic principal 
$\text{GL}(2, {\mathbb C})$--bundle on $X$ corresponding to $E$; so for any 
$x\,\in\, X$, the fiber $p^{-1}(x)$ is the space of all $\mathbb C$--linear
isomorphisms from 
${\mathbb C}^{\oplus 2}$ to the fiber $E_x$. Let
\begin{equation}\label{dph}
\mathrm{d}p\, :\, \Tan P_{{\rm GL}(2)}\, 
\longrightarrow\, p^*\TX
\end{equation}
be the differential of the above projection $p$. The kernel
$$
\Tan_{\rm rel} \, :=\, \text{kernel}(\mathrm{d}p)\, \subset\, \Tan P_{{\rm GL}(2)}
$$
is identified with $p^* \text{End}(E)$. The pullback $p^*\Atlog$ is
identified with $(\mathrm{d}p)^{-1}(\TXD)\, \subset\, \Tan P_{{\rm GL}(2)}$. The
image of $p^*\KX$ under the dual homomorphism $(\mathrm{d}p)^*\, :\, p^*\KX \,
\longrightarrow\, \Tan^*P_{{\rm GL}(2)}$ (see \eqref{dph}) will be denoted by
$\mathcal S$. For any
\begin{itemize}
\item open subset $U\, \subset\, X$,

\item $\text{GL}(2, {\mathbb C})$--invariant holomorphic section $\alpha$ of
$(\mathrm{d}p)^{-1}(\TXD)\,=\, p^*\Atlog$ defined over $p^{-1}(U)$, and

\item $\text{GL}(2, {\mathbb C})$--invariant holomorphic section $\beta$ of 
$\Tan_{\rm rel}\otimes {\mathcal S}$ over $p^{-1}(U)$,
\end{itemize}
the Lie derivative $L_\alpha \beta$ is again a $\text{GL}(2, {\mathbb C})$--invariant
holomorphic section of $\Tan_{\rm rel}\otimes {\mathcal S}$ over $p^{-1}(U)$. 
The homomorphism $\Atlog\, \longrightarrow\, \text{Diff}^1(\text{End}(E) 
\otimes \KX,\, \text{End}(E)\otimes \KX)$ defined by $\alpha\, \longmapsto\,
\{\beta\, \longmapsto\, L_\alpha \beta\}$ coincides with $\eta$.

Let $\theta$ be a Higgs field on the vector bundle $E$. Let
\begin{equation}\label{eth}
\psi_\theta\, :\, \Atlog\,\longrightarrow\,\text{End}(E)\otimes \KX\, , \ \ \ 
\alpha\, \longmapsto\, \eta(\alpha)(\theta)
\end{equation}
be the ${\mathcal O}_X$--linear homomorphism, where $\eta$ is the homomorphism
in \eqref{eta}.

\begin{lemma}[{\cite[p.~105,~Proposition~2.4]{Bi}}]\label{lem1}
The space of infinitesimal deformations of a $n$--pointed Riemann surface equipped
with a Higgs bundle
$$
\underline{z}\,=\, (X,\,D,\, (E,\, \theta))
$$
as above is the first hypercohomology ${\mathbb H}^1({\mathcal A}^{\underline{z}}_{\bullet})$,
where ${\mathcal A}^{\underline{z}}_{\bullet}$ is the two--term complex
$$
{\mathcal A}^{\underline{z}}_{0}\,=\, \Atlog \,
\stackrel{\psi_\theta}{\longrightarrow}\, {\mathcal A}^{\underline{z}}_1\,=\,
{\rm End}(E)\otimes \KX\, ,
$$
where $\psi_\theta$ is constructed in \eqref{eth}.
\end{lemma}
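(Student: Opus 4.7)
The plan is to adapt the Čech--theoretic description of infinitesimal deformations of the triple $(X,\,D,\,E)$ recalled in Section~\ref{se2.2}, which gives $\Coh^1(X,\,\Atlog)$, by appending the data needed to deform the Higgs field $\theta$. Fix a sufficiently fine open cover $\{U_i\}$ of $X$ such that $E\vert_{U_i}$ is trivializable and each $U_i$ meets $D$ in at most one point.

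Following \cite[Section~2]{BHH}, an infinitesimal deformation of $(X,\,D,\,E)$ is represented by a Čech $1$-cocycle $\{\alpha_{ij}\}$ with values in $\Atlog$: its symbol $\sigma(\alpha_{ij})\,\in\,\TXD$ records the deformation of the $n$-pointed curve, and its $\mathrm{End}(E)$-part encodes the deformation of the transition cocycle of $E$. A compatible first--order deformation of $\theta$ is then specified by local perturbations $\theta+\varepsilon\phi_i$ with $\phi_i\,\in\,\Coh^0(U_i,\,\mathrm{End}(E)\otimes\KX)$, giving the Higgs field over each patch of the total space. The requirement that these local Higgs fields agree on overlaps, after applying the infinitesimal automorphism encoded by $\alpha_{ij}$, takes the form
$$
\phi_i-\phi_j\,=\,\eta(\alpha_{ij})(\theta)\,=\,\psi_\theta(\alpha_{ij})
$$
on $U_i\cap U_j$. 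Thus the pair $(\{\phi_i\},\,\{\alpha_{ij}\})$ is precisely a Čech $1$-hypercocycle of the two-term complex $\mathcal{A}^{\underline{z}}_\bullet$. A trivial deformation, induced by an automorphism over $\mathrm{Spec}(\mathbb{C}[\varepsilon]/\varepsilon^2)$, is represented by a $0$-cochain $\{\beta_i\}$ in $\Atlog$ acting as $\alpha_{ij}=\beta_j-\beta_i$ and $\phi_i=-\psi_\theta(\beta_i)$, which is exactly the hypercoboundary relation. The desired bijection between deformation classes and ${\mathbb H}^1(\mathcal{A}^{\underline{z}}_\bullet)$ follows.

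The delicate step is to verify that the infinitesimal variation of $\theta$ under the combined gauge--and--curve transformation represented by $\alpha\,\in\,\Atlog$ is given exactly by $\psi_\theta(\alpha)=\eta(\alpha)(\theta)$. The canonical description of $\eta$ via Lie derivatives on the principal $\mathrm{GL}(2,\mathbb{C})$-bundle $P_{{\rm GL}(2)}$, recalled just before \eqref{eth}, makes this transparent: lift $\alpha$ to a $\mathrm{GL}(2,\mathbb{C})$-invariant vector field on $P_{{\rm GL}(2)}$, view $\theta$ as a $\mathrm{GL}(2,\mathbb{C})$-invariant section of $\Tan_{\mathrm{rel}}\otimes\mathcal{S}$, and take the ordinary Lie derivative along this vector field; the outcome is by construction $\eta(\alpha)(\theta)$. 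The coordinate formula \eqref{eta2} serves as a consistency check and, combined with the fact that $f\cdot\beta\otimes f^{-1}\omega$ and $\beta\otimes\omega$ produce the same image, shows that the resulting Čech cochains are independent of the chosen local trivializations, so the whole construction descends to hypercohomology as required.
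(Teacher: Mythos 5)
The paper does not prove this lemma; it simply cites \cite[p.~105, Proposition~2.4]{Bi}. Your \v{C}ech hypercocycle argument --- pairs $(\{\alpha_{ij}\},\{\phi_i\})$ with $\phi_i-\phi_j=\psi_\theta(\alpha_{ij})$ modulo hypercoboundaries, with the identification of the infinitesimal variation of $\theta$ as the Lie derivative $\eta(\alpha)(\theta)$ via the invariant-vector-field description on $P_{\mathrm{GL}(2)}$ --- is the standard proof of exactly this statement and is correct (up to immaterial sign conventions), matching the approach of the cited reference.
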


Note that in the case where $\theta$ is the zero--Higgs field, the space of infinitesimal 
deformations in Lemma \ref{lem1} coincides with $\Coh^1(X,\,\Atlog )\oplus \Coh^0(X,\, {\rm 
End}(E)\otimes \KX)$; as shown in Section \ref{Sec:Atiyah}, the space of infinitesimal 
deformations of the triple $(X,\,D,\, E)$ is $\Coh^1(X,\, \Atlog)$.

\subsection{Deformation of a $n$-pointed curve with a nilpotent Higgs 
bundle}\label{Sec:DefNilHiggs}

Consider the data $\underline{z}\,=\, (X,\,D,\, (E,\, \theta))$
in Lemma \ref{lem1}. Assume that $(E,\, \theta)\, \in\, {\mathcal N}_X$, where ${\mathcal N}_X$
is defined in \eqref{e-1}. As before, the line subbundle of $E$ defined by the kernel
of $\theta$ will be denoted by $L$. Let
$$
\AtEL\,\subset\, \Atlog
$$
be the holomorphic subbundle of co-rank one generated by the sheaf of differential operators
$\alpha$ such that $\alpha(L)\, \subset\, L$. Note that we have a commutative diagram
\begin{equation}\label{cd}
\xymatrix{
& 0\ar[d] & 0\ar[d] &0\ar[d] \\
 0 \ar[r] & \text{End}^s(E) \ar[r]\ar[d]& \AtEL \ar[r]\ar[d]^{\iota}
 & \TXD \ar[r]\ar@{=}[d]& 0\\
0 \ar[r] & \text{End}(E) \ar[r]\ar[d]& \Atlog \ar[r]^{\sigma}\ar[d]^q
 & \TXD \ar[r] & 0\\
& \text{Hom}(L,\, E/L) \ar@{=}[r]\ar[d]& \text{Hom}(L,\, E/L) \ar[d]\\
 & 0 & 0
}
\end{equation}
where all the rows and columns are exact; the bottom exact row is the one
in \eqref{e7} and $\text{End}^s(E)$ is the vector bundle in \eqref{endn}.
It can be shown that the homomorphism $\psi_\theta$ in \eqref{eth} satisfies the equation
$$
\psi_\theta(\AtEL)\, \subset\, \text{End}^s(E)\otimes \KX\, .
$$
Indeed, this follows from the expression in the right--hand side of \eqref{eta2}.

The restriction of $\psi_\theta$ to $\AtEL$ will also be denoted by 
$\psi_\theta$; this should not cause any confusion.

\begin{lemma}\label{lem2}
The space of infinitesimal deformations of the $n$--pointed Riemann surface equipped
with a nilpotent Higgs bundle
$$
\underline{z}\,=\, (X,\, D,\, (E,\, \theta))\, ,
$$
such that the Higgs field remains nilpotent,
is the first hypercohomology ${\mathbb H}^1({\mathcal B}^{\underline{z}}_{\bullet})$,
where ${\mathcal B}^{\underline{z}}_{\bullet}$ is the two--term complex
$$
{\mathcal B}^{\underline{z}}_{0}\,=\, \AtEL \,
\stackrel{\psi_\theta}{\longrightarrow}\, {\mathcal B}^{\underline{z}}_1\,=\,
{\rm End}^s(E)\otimes \KX\, .
$$
\end{lemma}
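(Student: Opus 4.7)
The plan is to adapt the argument of Lemma \ref{lem1} (that is, the proof of \cite[Proposition~2.4]{Bi}) by restricting attention to the substructure given by the kernel flag $L \subset E$, where $L = \ker(\theta)$. A nonzero nilpotent Higgs field is equivalent to the datum of a line subbundle $L$ of $E$ together with a nonzero morphism $E/L \to L \otimes \KX$, so preserving nilpotency under an infinitesimal deformation is equivalent to deforming the enriched datum $(X, D, E, L, \theta)$ in a way that keeps $\theta$ compatible with $L$. Consequently $\mathcal{B}^{\underline{z}}_\bullet$ should be interpreted as the subcomplex of $\mathcal{A}^{\underline{z}}_\bullet$ from Lemma \ref{lem1} cut out by this compatibility.

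The first step is to identify $\Coh^1(X,\AtEL)$ as the space of infinitesimal deformations of the quadruple $(X, D, E, L)$. This uses the Atiyah-type extension given by the top row of \eqref{cd}, namely $0 \to \mathrm{End}^s(E) \to \AtEL \to \TXD \to 0$, which plays for the pair $(E, L)$ exactly the role that the Atiyah sequence \eqref{e7} plays for $E$ alone: the symbol projection onto $\TXD$ gives the underlying deformation of the pointed curve, while the kernel $\mathrm{End}^s(E)$ consists of infinitesimal automorphisms of the pair $(E, L)$.

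The second step is to encode the Higgs field by running the \v{C}ech computation of \cite[Proposition~2.4]{Bi} essentially verbatim. As noted just before the statement of the lemma, the restriction of $\psi_\theta$ to $\AtEL$ takes values in $\mathrm{End}^s(E)\otimes \KX$ (indeed a local calculation in a trivialization adapted to $L$ shows that the image lies even in the subsheaf $\mathrm{End}^n(E)\otimes \KX$). A class in $\mathbb{H}^1(\mathcal{B}^{\underline{z}}_\bullet)$ is then represented by a pair $(\{\alpha_{ij}\},\{\dot\theta_i\})$, with $\{\alpha_{ij}\}$ a \v{C}ech $1$-cocycle with values in $\AtEL$ (encoding the deformation of $(X, D, E, L)$) and $\{\dot\theta_i\}$ a \v{C}ech $0$-cochain with values in $\mathrm{End}^s(E)\otimes \KX$ (encoding the local variation of $\theta$), subject to the cocycle compatibility $\dot\theta_j - \dot\theta_i = \psi_\theta(\alpha_{ij})$ on overlaps; this yields infinitesimal deformations of the quintuple $(X, D, E, L, \theta)$.

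The main technical point, which I expect to be the hardest part, is to verify that the deformations of the enriched quintuple $(X, D, E, L, \theta)$ captured by $\mathbb{H}^1(\mathcal{B}^{\underline{z}}_\bullet)$ correspond exactly to the deformations of $(X, D, E, \theta)$ along which $\theta$ remains nilpotent. In one direction, from any nilpotent-preserving deformation the deformed kernel $\ker(\tilde\theta)$ produces the required flag deformation $\tilde L$, forcing the \v{C}ech cocycle to take values in the subsheaf $\AtEL \subset \Atlog$. Conversely, given a class in $\mathbb{H}^1(\mathcal{B}^{\underline{z}}_\bullet)$, the facts that $\theta(L) = 0$ and that $\psi_\theta(\AtEL) \subset \mathrm{End}^n(E)\otimes \KX$ combine to ensure that the deformed Higgs field still annihilates the deformed flag $\tilde L$ and takes values in $\tilde L \otimes \KX$, so it is nilpotent.
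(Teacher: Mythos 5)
Your overall strategy is close in substance to the paper's: both arguments rest on the fact that, in rank two, a nonzero nilpotent Higgs field is equivalent to its kernel flag $L\subset E$ together with a map $E/L\to L\otimes\KX$, and both read everything off the diagram \eqref{cd}. The difference is one of packaging. You work directly with the subcomplex ${\mathcal B}^{\underline z}_\bullet$ at the level of \v{C}ech cocycles and interpret ${\mathbb H}^1({\mathcal B}^{\underline z}_\bullet)$ as deformations of the enriched quintuple $(X,D,E,L,\theta)$; the paper instead forms the short exact sequence of complexes \eqref{g1}, interprets the quotient complex $\widetilde{\mathcal C}_\bullet$ (built from ${\rm End}(E)/{\rm End}^s(E)\cong{\rm Hom}(L,E/L)$) as the obstruction to nilpotency, and identifies ${\mathbb H}^1({\mathcal B}^{\underline z}_\bullet)$ with the kernel of ${\mathbb H}^1({\mathcal A}^{\underline z}_\bullet)\longrightarrow{\mathbb H}^1(\widetilde{\mathcal C}_\bullet)$ via the long exact sequence of hypercohomology. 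Your forward direction (a nilpotency-preserving deformation forces the transition cocycle into $\AtEL$ and the local variations of $\theta$ into ${\rm End}^s(E)\otimes\KX$) is correct, and it is the only direction actually used afterwards, in Proposition \ref{prop1}.

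The converse direction in your last paragraph has a concrete gap. You claim that a class in ${\mathbb H}^1({\mathcal B}^{\underline z}_\bullet)$ produces a deformed Higgs field that ``still annihilates the deformed flag $\tilde L$,'' but the local variations $\dot\theta_i$ take values in ${\rm End}^s(E)\otimes\KX$, which \emph{preserves} $L$ without annihilating it. In a local frame adapted to $L$ one has $\theta=\bigl(\begin{smallmatrix}0&c\\0&0\end{smallmatrix}\bigr)$ and $\dot\theta_i=\bigl(\begin{smallmatrix}a&b\\0&d\end{smallmatrix}\bigr)$, so $\theta+\varepsilon\dot\theta_i$ is upper triangular with trace $\varepsilon(a+d)$, and it is nilpotent to first order if and only if $a+d=0$; neither the membership $\dot\theta_i\in{\rm End}^s(E)\otimes\KX$ nor the inclusion $\psi_\theta(\AtEL)\subset{\rm End}^n(E)\otimes\KX$ (which only constrains the differences $\dot\theta_j-\dot\theta_i$) forces this. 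What your cocycle computation actually identifies is the space of deformations along which $\theta$ continues to preserve a deformation of $L$; the nilpotency-preserving ones form the subspace where in addition the induced first-order trace vanishes. The paper's own proof is equally silent on this point, since its quotient complex $\widetilde{\mathcal C}_\bullet$ only detects the failure to preserve $L$, not the failure of nilpotency; but as written your justification of the converse is incorrect, and you should either replace ${\mathcal B}^{\underline z}_1$ by the trace-free part of ${\rm End}^s(E)\otimes\KX$, or state explicitly that you are only proving the inclusion of the nilpotency-preserving deformations into ${\mathbb H}^1({\mathcal B}^{\underline z}_\bullet)$, which suffices for the application.
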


\begin{proof}
Consider the short exact sequence of complexes
\begin{equation}\label{g1}
\xymatrix{
0 \ar[d]&& 0\ar[d]\\
{\mathcal B}^{\underline{z}}_{0}\,=\, \AtEL \ar[rr]^{\hskip-10pt \psi_\theta} \ar[d] && {\mathcal B}^{\underline{z}}_1\,=\,
{\rm End}^s(E)\otimes \KX \ar[d] \\
{\mathcal A}^{\underline{z}}_{0}\,=\, \Atlog \ar[rr]^{\hskip-10pt \psi_\theta} \ar[d]&& {\mathcal A}^{\underline{z}}_1\,=\,
{\rm End}(E)\otimes \KX \ar[d]\\
{\rm End}(E)/{\rm End}^s(E)\ar[rr]^{\hskip-10pt f'_\theta} \ar[d] && ({\rm End}(E)/{\rm End}^s(E))\otimes \KX\ar[d]\\ 
0 && 0
 }
\end{equation}
where the middle complex is the one in Lemma \ref{lem1}, and the
homomorphism $f'_\theta$ is induced by $f_\theta$ in \eqref{e2}; note
that from \eqref{hin} it follows immediately that $f_\theta$ induces such a homomorphism.

Consider the above complex $\widetilde{\mathcal C}_{\bullet}$
$$
\widetilde{\mathcal C}_0 \,=\, {\rm End}(E)/{\rm End}^s(E) \,\stackrel{f'_\theta
}{\longrightarrow} \,\widetilde{\mathcal C}_1\,=\,({\rm End}(E)/{\rm End}^s(E))\otimes \KX\, .
$$
The homomorphism ${\mathbb H}^1({\mathcal 
A}^{\underline{z}}_{\bullet})\,\longrightarrow\, {\mathbb H}^1(\widetilde{\mathcal 
C}_{\bullet})$ induced by the homomorphism of complexes in \eqref{g1} coincides
with the natural homomorphism from the infinitesimal deformations of
$(X,\, D,\, (E,\, \theta))$ (without assuming that
the Higgs field remains nilpotent) to the failure of the Higgs field to be nilpotent. The
lemma follows from it.
\end{proof} 

\section{Proof of Theorem \ref{thm1}}

Let $E$ be a holomorphic vector bundle on $X$ of rank two and degree $d$, which
is wobbly. Let $\theta$ be a nonzero nilpotent Higgs field on $E$. As before, let $L\,
\subset\, E$ be the line subbundle defined by the kernel of $\theta$. Given a
logarithmic connection $\delta$ on $E$ with polar divisor $D$,
consider the composition homomorphism
\begin{equation}\label{c3}
q\circ\delta\,:\,\TXD \, \longrightarrow\, \text{Hom}(L,\, E/L)\, ,
\end{equation}
where $q$ is the homomorphism in \eqref{cd}. Let
\begin{equation}\label{d3}
(q\circ\delta)_*\, :\, \Coh^1(X,\, \TXD) \, \longrightarrow\, \Coh^1(X,\,
\text{Hom}(L,\, E/L))
\end{equation}
be the homomorphism of cohomologies induced by $q\circ\delta$ in \eqref{c3}.

\begin{proposition}\label{prop1}
Let $(\mathcal{X},\, \mathcal{D},\, \mathcal{E})$ be an infinitesimal deformation,
with parameter space $\mathcal{B}\,=\,\mathrm{Spec}\left(\mathbb{C}[\varepsilon]/
\varepsilon^2\right)$, of a stable rank two vector bundle $E$ over a
$n$-pointed Riemann surface $(X,\,D)$. Assume that $\mathcal{E}$ is endowed with
a nonzero nilpotent Higgs field $\Theta$, inducing a nonzero nilpotent Higgs field
$\theta$ on $E$. Further assume that $\mathcal{E}$ is endowed with a flat logarithmic
connection $\nabla$, singular over $\mathcal{D}$, inducing a logarithmic connection
$\delta$ on $E$ with polar divisor $D$. Then
$$
(q\circ\delta)_*\, =\, 0\, ,
$$
where $(q\circ\delta)_*$ is the homomorphism in \eqref{d3}.
\end{proposition}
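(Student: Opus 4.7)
The plan is to combine Lemma \ref{lem3} and Lemma \ref{lem2} to lift the isomonodromic deformation class through $\iota_*$, whereupon the conclusion drops out of the middle column of \eqref{cd}. Let $v \in \Coh^1(X, \TXD)$ denote the Kodaira--Spencer class of the infinitesimal deformation $(\mathcal{X}, \mathcal{D})$; the task is to show $q_*(\delta_*(v)) = 0$.

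Because the underlying deformation of $(X, D, E)$ carries the flat logarithmic extension $\nabla$ of $\delta$, it is by construction isomonodromic. Lemma \ref{lem3} therefore identifies its class in $\Coh^1(X, \Atlog)$ as $\delta_*(v)$. So it suffices to exhibit a preimage of $\delta_*(v)$ under the inclusion-induced map
$$
\iota_*\colon \Coh^1(X, \AtEL) \longrightarrow \Coh^1(X, \Atlog).
$$

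The additional nilpotent Higgs extension $\Theta$ allows me to invoke Lemma \ref{lem2}: the data $(\mathcal{X}, \mathcal{D}, \mathcal{E}, \Theta)$ determines a class $\tilde\alpha \in \mathbb{H}^1(\mathcal{B}^{\underline{z}}_\bullet)$. Since $\mathcal{B}^{\underline{z}}_0 = \AtEL$ sits in cohomological degree zero, the hypercohomology long exact sequence supplies a natural truncation map $\mathbb{H}^1(\mathcal{B}^{\underline{z}}_\bullet) \to \Coh^1(X, \AtEL)$; let $\beta$ denote the image of $\tilde\alpha$. The inclusion of complexes $\mathcal{B}^{\underline{z}}_\bullet \hookrightarrow \mathcal{A}^{\underline{z}}_\bullet$ visible in \eqref{g1} fits into a commutative square whose horizontal arrows are these truncations and whose right-hand vertical arrow is $\iota_*$. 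Tracing $\tilde\alpha$ around the square yields $\iota_*(\beta) = \delta_*(v)$, using that the image in $\mathbb{H}^1(\mathcal{A}^{\underline{z}}_\bullet)$ truncates to the deformation class of $(X,D,E)$, namely $\delta_*(v)$.

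Finally, the middle column of \eqref{cd} is the short exact sequence
$$
0 \longrightarrow \AtEL \stackrel{\iota}{\longrightarrow} \Atlog \stackrel{q}{\longrightarrow} \mathrm{Hom}(L, E/L) \longrightarrow 0,
$$
so $q_* \circ \iota_* = 0$ on $\Coh^1$. Consequently
$$
(q\circ\delta)_*(v) \;=\; q_*(\delta_*(v)) \;=\; q_*(\iota_*(\beta)) \;=\; 0,
$$
as desired. The step demanding the most care is the compatibility $\iota_*(\beta) = \delta_*(v)$: one must verify from a cocycle representative of $\tilde\alpha$ that the $\mathbb{H}^1$-truncation is compatible with $\iota_*$ in the asserted sense, using both columns of \eqref{g1}. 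This is the conceptual heart of the argument, where the hypotheses on $\Theta$ and $\nabla$ combine.
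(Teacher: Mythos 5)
Your proposal is correct and follows essentially the same route as the paper: your truncation map $\mathbb{H}^1(\mathcal{B}^{\underline{z}}_\bullet)\to \Coh^1(X,\AtEL)$ is exactly the paper's map $\phi$, your compatibility $\iota_*(\beta)=\delta_*(v)$ is the paper's factorization $T^\delta=\iota_*\circ\phi\circ T^\delta_0$ combined with Lemma \ref{lem3}, and both arguments conclude via $q_*\circ\iota_*=0$ from the middle column of \eqref{cd}.
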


\begin{proof}
Consider the complex ${\mathcal B}^{\underline{z}}_{\bullet}$ in Lemma \ref{lem2}.
The identity map of ${\mathcal B}^{\underline{z}}_{0}$ produces a homomorphism
of complexes
$$
\xymatrix{
{\mathcal B}^{\underline{z}}_{0}\,=\, \AtEL \ar[rr]^{\hskip25pt \psi_\theta} \ar[d] && {\mathcal B}^{\underline{z}}_1\ar[d]\\
\AtEL\ar[rr]&& 0
 }
$$
The $i$-th hypercohomology of the bottom complex coincides with
$\Coh^i(X,\, \AtEL)$. Let 
\begin{equation}\label{tau}
\phi\, :\, {\mathbb H}^1({\mathcal B}^{\underline{z}}_{\bullet})\,\longrightarrow\,
\Coh^1(X,\, \AtEL)
\end{equation}
be the homomorphism of hypercohomologies associated to the
above homomorphism of complexes.

Since the nilpotent Higgs field $\theta$ on $E$ extends as a nilpotent Higgs field 
$\Theta$ on $\mathcal{E}$, from Lemma \ref{lem2} it follows that the homomorphism 
$T^\delta$ in \eqref{id2} factors as
\begin{equation}\label{tau2}
T^\delta\,=\, \iota_*\circ\phi\circ T^\delta_0\, ,
\end{equation}
where
$$
T^\delta_0\, :\, \Coh^1(X,\, \TXD) \, \longrightarrow\,
{\mathbb H}^1({\mathcal B}^{\underline{z}}_{\bullet})
$$
is a homomorphism, $\phi$ is the homomorphism in \eqref{tau} and
$$
\iota_*\, :\, \Coh^1(X,\, \AtEL)\, \longrightarrow\,
\Coh^1(X,\, \Atlog)
$$
is the homomorphism of cohomologies induced by the inclusion
$\iota\, :\, \AtEL \, \hookrightarrow\, \Atlog$
(see \eqref{cd}).

{}From Lemma \ref{lem3} we know that
\begin{equation}\label{lk}
(q\circ\delta)_*\,=\, q_*\circ T^\delta\, ,
\end{equation}
where
$$
q_*\, :\, \Coh^1(X,\, \Atlog)\, \longrightarrow\, \Coh^1(X,\, 
\text{Hom}(L,\, E/L))
$$
is the homomorphism of cohomologies induced by the homomorphism $q$ in
\eqref{cd}. Now from \eqref{tau2} and \eqref{lk} it follows that
$$
(q\circ\delta)_* \,=\, q_*\circ\iota_*\circ\phi\circ T^\delta_0\, .
$$
On the other hand, from \eqref{cd} it follows immediately that
$q_*\circ\iota_*\,=\, 0$. Hence we conclude that $(q\circ\delta)_* \,=\,0$.
\end{proof}

Let $\mathrm{Teich}_{g,n}$ be the Teichm\"uller space for $n$-pointed surfaces of
genus $g\, \geq\, 2$. Take $(X,\, D,\, E,\, \delta)$ as before.
Let $(\mathcal{X},\, \mathcal{D},\, \mathcal{E}\, , \nabla)$ be the universal
isomonodromic deformation of $(X,\, D,\, E,\, \delta)$, with parameter space 
$\mathcal{T}\,=\,\mathrm{Teich}_{g,n}$ and central parameter $t_0\,\in \,\mathcal{T}$, 
together with an isomorphism of $n$-pointed surfaces $f \,:\,(X,\,D)\,\longrightarrow\,
(X_{\para_0},\, D_{\para_0})$ and a holomorphic isomorphism $\psi \,:\, E\,
\longrightarrow\, f^*\mathcal{E}$ such that $f^*\nabla \,=\,\psi_*(\delta)$
as in the introduction. Note that for any $t_1\,\in\, \mathcal{T}$, the universal
isomonodromic deformation $(\mathcal{X} ,\, \mathcal{D},\,\mathcal{E},\, \nabla)$ is
also the universal isomonodromic deformation of
$(\mathcal{X} ,\, \mathcal{D},\,\mathcal{E},\, \nabla)\vert_{t_1}$. As before, for
any $t\,\in\, \mathcal{T}$, we shall denote 
$(\mathcal{X},\, \mathcal{E})\vert_{t}$ by $(X_t,\, E_t)$.

In view of Theorem \ref{BHHrank2} and the openness of the very stability 
condition, the following theorem implies Theorem \ref{thm1}.
 
\begin{theorem}\label{thm3}
Let $(X,\, D,\, E,\, \delta)$ and $(\mathcal{X},\, \mathcal{D},\,
\mathcal{E}\, , \nabla)$ be as above with $\delta$ irreducible. Then there is a point 
$t\,\in\, \mathcal{T}$ such that the vector bundle $E_t\longrightarrow {X}_t$ is
very stable. 
\end{theorem}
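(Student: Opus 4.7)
My plan is to argue by contradiction. Suppose no $t\in\mathcal{T}$ yields a very stable $E_t$, so the wobbly locus $\Par^{nil}$ defined in \eqref{Parnil} equals $\mathcal{T}$. By Theorem~\ref{BHHrank2} the bundle $E_t$ is stable for general $t$, and since the restriction of the universal isomonodromic deformation to any point is again universal and preserves irreducibility of $\delta$ (Section~\ref{Sec:Atiyah}), I may replace $t_0$ by such a nearby point and assume $E=E_{t_0}$ is stable, hence by hypothesis wobbly. Openness of stability together with the construction sketched right after Theorem~\ref{thm1} then furnishes a polydisc neighbourhood $\mathcal{U}\subset\mathcal{T}$ of $t_0$ and a section $\Theta$ of $(\mathrm{End}(\mathcal{E})\otimes\Omega^{1}_{\mathcal{X}})\vert_{\mathcal{U}}$ whose restriction to every fibre $X_t$ is a nonzero nilpotent Higgs field on $E_t$. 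Setting $\theta=\Theta\vert_{X_{t_0}}$, let $L\subset E$ be its kernel, a holomorphic line subbundle.

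For any tangent direction $v\in\Coh^{1}(X,\TXD)$ at $t_0$, restricting $(\mathcal{X},\mathcal{D},\mathcal{E},\nabla,\Theta)$ to the first-order neighbourhood $\mathrm{Spec}(\mathbb{C}[\varepsilon]/\varepsilon^{2})\hookrightarrow\mathcal{U}$ in direction $v$ gives an infinitesimal deformation to which Proposition~\ref{prop1} applies. Consequently the map
$$
(q\circ\delta)_{*}\,:\,\Coh^{1}(X,\TXD)\,\longrightarrow\,\Coh^{1}(X,\mathrm{Hom}(L,E/L))
$$
of \eqref{d3} vanishes identically.

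I then derive a contradiction from Serre duality. Via the adjunction $\mathrm{Hom}(\TXD,\mathrm{Hom}(L,E/L))=\mathrm{Hom}(L,E/L)\otimes\KX(D)$, the sheaf map $q\circ\delta$ corresponds to a section $\Phi\in\Coh^{0}(X,\mathrm{Hom}(L,E/L)\otimes\KX(D))$, i.e.\ a morphism $\Phi:L\to (E/L)\otimes\KX(D)$. Using $\TXD^{\vee}=\KX(D)$ and $\mathrm{Hom}(L,E/L)^{\vee}=\mathrm{Hom}(E/L,L)$, the Serre dual of $(q\circ\delta)_{*}$ is a homomorphism
$$
\Coh^{0}(X,\mathrm{Hom}(E/L,L)\otimes\KX)\,\longrightarrow\,\Coh^{0}(X,\KX^{\otimes 2}(D))
$$
sending an element $\tilde{\alpha}:E/L\to L\otimes\KX$ to the composition $\tilde{\alpha}\circ\Phi:L\to L\otimes\KX^{\otimes 2}(D)$. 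The Higgs field $\theta$ factors as $E\twoheadrightarrow E/L\stackrel{\theta'}{\longrightarrow}L\otimes\KX\hookrightarrow E\otimes\KX$, giving a nonzero $\theta'\in\Coh^{0}(X,\mathrm{Hom}(E/L,L)\otimes\KX)$. Irreducibility of $\delta$ forces $\Phi\neq 0$, since otherwise $L$ would be $\delta$-invariant. As $L$ and $E/L$ are line bundles on a compact Riemann surface, both $\theta'$ and $\Phi$ are generically injective, and their composite $\theta'\circ\Phi$ is therefore a nonzero section of $\KX^{\otimes 2}(D)$. This contradicts $(q\circ\delta)_{*}=0$, completing the argument.

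The principal technical step is the Serre-duality bookkeeping: one must carefully identify the dual of the cohomology map $(q\circ\delta)_{*}$ with the map on global sections induced by contraction with $\Phi$, keep precise track of the twists by $\KX$ and $\mathcal{O}_{X}(D)$, and verify that its value on $\theta'$ is exactly the composite $\theta'\circ\Phi$. Once this identification is in place, the non-vanishing of a composite of two nonzero maps between line bundles on a curve is routine.
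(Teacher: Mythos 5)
Your proposal is correct and follows essentially the same route as the paper: reduce via Theorem~\ref{BHHrank2} and Proposition~\ref{prop1} to showing $(q\circ\delta)_*\neq 0$, use irreducibility of $\delta$ to get $q\circ\delta\neq 0$, and use the nonzero nilpotent $\theta$ (equivalently $\theta'\in\Coh^0(X,\mathrm{Hom}(E/L,L)\otimes\KX)$) together with Serre duality to conclude. The only cosmetic difference is in the last step: the paper shows $(q\circ\delta)_*$ is surjective (torsion cokernel) onto the nonzero space $\Coh^1(X,\mathrm{Hom}(L,E/L))$, whereas you show the Serre-dual map is nonzero by evaluating it on $\theta'$ and noting $\theta'\circ\Phi\neq 0$ --- two dual phrasings of the same argument.
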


\begin{proof}
In view of Theorem \ref{BHHrank2} we can take $E$ to be stable. Assume
that $E$ is wobbly. Let $\theta$ be a nonzero nilpotent Higgs field on
$E$. In view of Proposition \ref{prop1} it suffices to show that
\begin{equation}\label{c2}
(q\circ\delta)_*\, \not=\, 0\, ,
\end{equation}
where $(q\circ\delta)_*$ is the homomorphism in \eqref{d3}.
Note that the homomorphism $q\circ\delta$ in \eqref{c3} is nonzero because
the logarithmic connection $\delta$ is irreducible. Consider the short
exact sequence of coherent sheaves
$$
0\,\longrightarrow\, \TXD \, \stackrel{q\circ\delta}{\longrightarrow}\,
\text{Hom}(L,\, E/L) \,\longrightarrow\, \mathbb{T} \,\longrightarrow\, 0\, ,
$$
where $\mathbb{T}$ is a torsion sheaf on $X$, so $\Coh^1(X,\, \mathbb{T})\,=\,0$.
Hence the long exact sequence of cohomologies associated to it gives a surjection
$$
\Coh^1(X,\, \TXD) \, \stackrel{(q\circ\delta)_*}{\longrightarrow}\, \Coh^1(X,\, 
\text{Hom}(L,\, E/L))\,\longrightarrow\, 0\, .
$$
Therefore, to prove \eqref{c2} it suffices to show that
\begin{equation}\label{c4}
\Coh^1(X,\, \text{Hom}(L,\, E/L))\, \not=\, 0\, .
\end{equation}
Now, by Serre duality,
$$
\Coh^1(X,\, \text{Hom}(L,\, E/L))\,=\, \Coh^0(X,\, \text{Hom}(E/L,\, L)\otimes \KX)^*\, .
$$
Since $\theta$ is nonzero nilpotent and $L\, \subset\, E$ is the kernel of $\theta$, it
follows immediately that $\theta$ is a nonzero section of $\text{Hom}(E/L,\, L)\otimes
\KX$. Therefore, \eqref{c4} is proved. This completes the proof of the theorem.
\end{proof}

When the rank is more than two, the very last part of the argument in
the proof of Theorem \ref{thm3} breaks down --- the nonzero nilpotent Higgs field
$\theta$ no longer implies that $(q\circ\delta)_*\, \not=\, 0$.

%%%%%%%%%%%%%%%%%%%%%%%%%%%%%%%%%%%%%%%%%%%%%%%%%%%%%%%%%%%%%%

\end{document}